\documentclass[10pt]{amsart}
\usepackage{amssymb}
\usepackage{epsfig}
\usepackage{url}
\usepackage{setspace}
\usepackage{color}
\theoremstyle{plain}

%\textwidth=480pt \textheight=580pt \hoffset=-45pt \voffset=-30pt
%\parindent=0em
%\parskip=6pt
%\raggedbottom

\newtheorem{thm}{Theorem}[section]
\newtheorem{cor}[thm]{Corollary}
\newtheorem{lem}[thm]{Lemma}
\newtheorem{prop}[thm]{Proposition}

\renewcommand{\phi}{\varphi}

\setlength{\oddsidemargin}{0cm}  %Print setup: top: -1cm
\setlength{\evensidemargin}{0cm} %Left margin: 2.5cm
\setlength{\topmargin}{-2.0cm} \setlength{\textheight}{25cm}
\setlength{\textwidth}{17cm}

\newcommand{\Diag}{{\rm Diag}\mathbb{P}}

\begin{document}
\title[A note on primes with prime indices]{A note on primes with prime indices}
\author{B{\l}a\.{z}ej \.{Z}mija}

\keywords{prime number, subsequence} \subjclass[2010]{11A41}
\thanks{During the preparation of the work, the author was a scholarship holder of the Kartezjusz program funded by the Polish National Center for Research and Development.}

\begin{abstract}
Let $n,k\in\mathbb{N}$ and let $p_{n}$ denote the $n$th prime number. We define $p_{n}^{(k)}$ recursively as $p_{n}^{(1)}:=p_{n}$ and $p_{n}^{(k)}=p_{p_{n}^{(k-1)}}$, that is, $p_{n}^{(k)}$ is the $p_{n}^{(k-1)}$th prime. 

In this note we give answers to some questions and prove a conjecture posed by Miska and T\'{o}th in their recent paper concerning subsequences of the sequence of prime numbers. In particular, we establish explicit upper and lower bounds for $p_{n}^{(k)}$.
We also study the behaviour of the counting functions of the sequences $(p_{n}^{(k)})_{k=1}^{\infty}$ and $(p_{k}^{(k)})_{k=1}^{\infty}$.
\end{abstract}

\maketitle

\section{Introduction}

Let $(p_{n})_{n=1}^{\infty}$ be the sequence of consecutive prime numbers. In a recent paper \cite{MT} Miska and T\'{o}th introduced the following subsequences of the sequence of prime numbers: $p_{n}^{(1)}:=p_{n}$ and for $k\geq 2$
\begin{align*}
    p_{n}^{(k)}:=p_{p_{n}^{(k-1)}}.
\end{align*}
In other words, $p_{n}^{(k)}$ is the $p_{n}^{(k-1)}$th prime. They also defined
\begin{align*}
    \Diag := & \{\ p_{k}^{(k)}\ |\ k\in\mathbb{N}\ \}, \\
    \mathbb{P}_{n}^{T}:= & \{\ p_{n}^{(k)}\ |\ k\in\mathbb{N}\ \}
\end{align*}
for each positive integer $n$.

The main motivation in \cite{MT} was the known result that the set of prime numbers is ($R$)-dense, that is, the set $\{\ \frac{p}{q}\ |\ p,q\in\mathbb{P}\ \}$ is dense in $\mathbb{R}_{+}$ (with respect to the natural topology on $\mathbb{R}_{+}$). It was proved in \cite{MT} that for each $k\in\mathbb{N}$ the sequence $\mathbb{P}_{k}:=(p_{n}^{(k)})_{n=1}^{\infty}$ is ($R$)-dense. This result might be surprising, because the sequences $\mathbb{P}_{k}$ are very sparse. In fact, for each $k$ set $\mathbb{P}_{k+1}$ is a zero asymptotic density subset of $\mathbb{P}_{k}$. On the other hand, it was showed, that the sequences $(p_{n}^{(k)})_{k=1}^{\infty}$ for each fixed $n\in\mathbb{N}$, and $(p_{k}^{(k)})_{k=1}^{\infty}$ are not ($R$)-dense.

Results of another type that were proved in \cite{MT} concern the asymptotic behaviour of $p_{n}^{(k)}$ as $n\rightarrow\infty$, or as $k\rightarrow\infty$. In particular, as $n\rightarrow\infty$, we have for each $k\in\mathbb{N}$
\begin{align*}
    p_{n}^{(k)}\sim n(\log n)^{k},\ \ \ \ p_{n+1}^{(k)}\sim p_{n}^{(k)}, \ \ \ \ \log p_{n}^{(k)}\sim \log n
\end{align*}
by \cite[Theorem 1]{MT}. Some results from \cite{MT} concerning $p_{n}^{(k)}$ as $k\rightarrow\infty$ are mentioned later.

For a set $A\subseteq \mathbb{N}$ let $A(x)$ be its counting function, that is,
\begin{align*}
    A(x):=\# \left(A\cap [1,x]\right).
\end{align*}
Miska and T\'{o}th posed four questions concerning the numbers $p_{n}^{(k)}$:
\begin{enumerate}
    \item[A.] Is it true that $p_{k+1}^{(k)}\sim p_{k}^{(k)}$ as $k\rightarrow\infty$?
    \item[B.] Are there real constants $c>0$ and $\beta$ such that
    \begin{align*}
        \exp\mathbb{P}_{n}^{T}(x)\sim cx(\log x)^{\beta}
    \end{align*}
    for each $n\in\mathbb{N}$?
    \item[C.] Are there real constants $c>0$ and $\beta$ such that
    \begin{align*}
        \exp\Diag (x)\sim cx(\log x)^{\beta}?
    \end{align*}
    \item[D.] Is it true that
    \begin{align*}
        \Diag (x)\sim\mathbb{P}_{n}^{T}(x)
    \end{align*}
    for each $n\in\mathbb{N}$?
\end{enumerate}
The aim of this paper it to give answers to question B, C and D. 

The main ingredients of our proofs are the following inequalities:
\begin{align}\label{ineqPrimes}
    n\log n<p_{n}<2n\log n.
\end{align}
The first inequality holds for all $n\geq 2$, and the second one for all $n\geq 3$. For the proofs, see \cite{RS}. In Section \ref{Results} we use (\ref{ineqPrimes}) in order to show explicit bounds for $p_{n}^{(k)}$. In particular, for all $n>e^{4200}$ we have:
\begin{align*}
    \log p_{n}^{(k)}= & k(\log k+\log\log k+O_{n}(1)), \\
    \log p_{k}^{(k)}= & k(\log k+\log\log k+O(\log\log\log k)),
\end{align*}
as $k\rightarrow\infty$, where the implied constant in the first line may depend on $n$, see Theorem \ref{MAIN} below. In consequence, we improve the (in)equalities
\begin{align*}
    \lim_{k\rightarrow\infty}\frac{p_{n}^{(k)}}{k\log k}= & 1, \\
    1\leq \liminf_{k\rightarrow\infty}\frac{p_{k}^{(k)}}{k\log k}\leq &  \limsup_{k\rightarrow\infty}\frac{p_{k}^{(k)}}{k\log k}\leq 2.
\end{align*}
that appeared in \cite{MT}. Then we show in Section \ref{Coro} that the answers to questions B and C are negative (Corollary \ref{CoroBC}), while the one for question D is affirmative (Theorem \ref{AsympEqThm}). In fact, we find the following relation:
\begin{align*}
    \mathbb{P}_{n}^{T}(x)\sim\Diag (x)\sim\frac{\log x}{\log\log x}
\end{align*}
for all positive integers $n$.

In their paper, Miska and T\'{o}th also posed a conjecture, that we state here as a proposition, since it is in fact a consequence of a result that had already appeared in \cite{MT}.

\begin{prop}
Let $n\in\mathbb{N}$ be fixed. Then
\begin{align*}
    \frac{p_{n}^{(k)}}{p_{k}^{(k)}}\longrightarrow 0
\end{align*}
as $k\longrightarrow\infty$.
\end{prop}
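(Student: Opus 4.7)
The plan is to compare $p_k^{(k)}$ with $p_n^{(k+1)}$ through a simple monotonicity argument, and then exploit the fact that $p_n^{(k+1)}/p_n^{(k)}$ diverges to infinity with $k$.

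First I would prove by induction on $j \geq 1$ that, for a fixed $n$ and every $k \geq p_n$,
\begin{align*}
    p_k^{(j)} \geq p_n^{(j+1)}.
\end{align*}
The base case $j=1$ is just $p_k \geq p_{p_n} = p_n^{(2)}$, which follows from the assumption $k \geq p_n$ together with the monotonicity of the prime-counting sequence. The inductive step is identical in spirit: assuming $p_k^{(j)} \geq p_n^{(j+1)}$, one gets $p_k^{(j+1)} = p_{p_k^{(j)}} \geq p_{p_n^{(j+1)}} = p_n^{(j+2)}$. Note that the hypothesis $k \geq p_n$ is independent of $j$, so taking $j = k$ is harmless as soon as $k$ is larger than the constant $p_n$; this yields
\begin{align*}
    p_k^{(k)} \geq p_n^{(k+1)} = p_{p_n^{(k)}}.
\end{align*}

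It remains to apply the inequality (\ref{ineqPrimes}) to $m := p_n^{(k)}$ (which is $\geq 2$) to obtain $p_m \geq m\log m$, from which
\begin{align*}
    \frac{p_n^{(k)}}{p_k^{(k)}} \leq \frac{p_n^{(k)}}{p_{p_n^{(k)}}} \leq \frac{1}{\log p_n^{(k)}}.
\end{align*}
Since $p_m > m$ for every $m \geq 1$, the iterates $p_n^{(k)}$ form a strictly increasing sequence in $k$, and in particular $p_n^{(k)}\to\infty$; hence the right-hand side tends to $0$. There is essentially no obstacle here: the proof rests entirely on the elementary monotone inequality $p_k^{(k)} \geq p_n^{(k+1)}$, which reduces the diagonal quantity to one more prime iterate of $p_n^{(k)}$ itself, after which the divergence of $p_m/m$ does all the work.
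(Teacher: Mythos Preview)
Your proof is correct and follows essentially the same route as the paper: both arguments reduce to the inequality $p_k^{(k)}\geq p_n^{(k+1)}$ for $k\geq p_n$ (the paper writes this as $p_k^{(k)}>p_{p_n}^{(k)}=p_n^{(k+1)}$ for $k>p_n$), obtained by the same monotonicity-in-the-index induction. The only difference is in the final step: the paper invokes \cite[Corollary~3]{MT} to conclude $p_n^{(k)}/p_n^{(k+1)}\to 0$, whereas you supply a self-contained bound $p_n^{(k)}/p_{p_n^{(k)}}\leq 1/\log p_n^{(k)}$ via inequality~(\ref{ineqPrimes}), which is arguably cleaner since it avoids the external citation.
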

\begin{proof}
Let $k> p_{n}$. Then
\begin{align*}
    0\leq \frac{p_{n}^{(k)}}{p_{k}^{(k)}}<\frac{p_{n}^{(k)}}{p_{p_{n}}^{(k)}}=\frac{p_{n}^{(k)}}{p_{n}^{(k+1)}}.
\end{align*}
The expression on the right goes to zero as $k$ goes to infinity, as was proved in \cite[Corollary 3]{MT}.
\end{proof}

It is worth to note, that primes with prime indices have already appeared in the literature, for example in \cite{BB} and \cite{BKO}. However, according to our best knowledge, our paper is the second one (after \cite{MT}), where the number of iterations of indices, that is, the number $k$ in $p_{n}^{(k)}$, is not fixed.

Throughout the paper we use the following notation: $\log x$ denotes the natural logarithm of $x$, and for functions $f$ and $g$ we write $f\sim g$ if $\lim_{x\rightarrow\infty}\frac{f(x)}{g(x)}=1$, $f=O(g)$ if there exists a positive constants $c$ such that $f(x)<cg(x)$, and $f=o(g)$ if $\lim_{x\rightarrow\infty}\frac{f(x)}{g(x)}=0$.

\section{Upper and lower bounds for $p_{n}^{(k)}$}\label{Results}

In this Section, we find explicit upper and lower bounds for $p_{n}^{(k)}$. We start with the upper bound.

\begin{lem}\label{lemUP}
Let $n\geq 9$. Then for each $k\in\mathbb{N}$ we have:
\begin{align*}
    p_{n}^{(k)}<2^{2k-1}\cdot n\cdot (k-1)!\cdot\big(\log(\max\{k,n\})\big)^{k}. 
\end{align*}
In particular,
\begin{align*}
    p_{n}^{(k)}< \big(4\cdot k\log k\big)^{k}
\end{align*}
for $k\geq n$.
\end{lem}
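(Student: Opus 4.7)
The approach is induction on $k$, using the upper bound $p_{m}<2m\log m$ from~(\ref{ineqPrimes}) to pass from stage $k$ to stage $k+1$.

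The base case $k=1$ reduces exactly to $p_{n}<2n\log n$, which is immediate from~(\ref{ineqPrimes}) since $n\geq 9\geq 3$.

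For the inductive step I would set $M:=\max\{k,n\}$ and $M':=\max\{k+1,n\}$, noting that $M'\geq M\geq 9$. Writing $p_{n}^{(k+1)}=p_{p_{n}^{(k)}}<2p_{n}^{(k)}\log p_{n}^{(k)}$ (legitimate because $p_{n}^{(k)}\geq p_{9}=23\geq 3$) and then inserting the inductive hypothesis gives
\begin{equation*}
    p_{n}^{(k+1)} < 2^{2k}\,n\,(k-1)!\,(\log M)^{k}\log p_{n}^{(k)}.
\end{equation*}
Comparing with the target bound at stage $k+1$, the induction closes once we know
\begin{equation*}
    \log p_{n}^{(k)} \leq 2k\cdot\frac{(\log M')^{k+1}}{(\log M)^{k}}.
\end{equation*}
Since $M'\geq M$, the right-hand side is at least $2k\log M$, so it is enough to prove the cleaner estimate $\log p_{n}^{(k)}\leq 2k\log M$.

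This last inequality is the heart of the argument. Taking logarithms in the inductive hypothesis and using $\log n\leq\log M$ together with $\log((k-1)!)\leq(k-1)\log(k-1)\leq(k-1)\log M$, the required bound reduces to a single scalar inequality
\begin{equation*}
    (2k-1)\log 2 \leq k\log\!\left(\frac{M}{\log M}\right).
\end{equation*}
Here the hypothesis $n\geq 9$ does its work: because $M\geq 9$ and $x\mapsto x/\log x$ is increasing for $x\geq e$, the right-hand side is at least $k(\log 9-\log\log 9)$, and a quick numerical check yields $\log 9-\log\log 9>2\log 2\geq(2-1/k)\log 2$. The chief obstacle — really the only one — is confirming this narrow numerical margin, since the constants $2$ and $9$ appearing in the statement are tuned precisely so as not to destroy it.

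The ``In particular'' clause follows at once: for $k\geq n$ one has $\max\{k,n\}=k$, and $n(k-1)!\leq k!\leq k^{k}$ together with the main bound yields $p_{n}^{(k)}<2^{2k-1}k^{k}(\log k)^{k}<(4k\log k)^{k}$.
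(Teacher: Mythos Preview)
Your proof is correct and follows essentially the same route as the paper: induction on $k$, with the inductive step boiling down to $\log p_{n}^{(k)}\leq 2k\log M$, i.e.\ $p_{n}^{(k)}\leq M^{2k}$. The only cosmetic difference is that the paper verifies this multiplicatively---bounding the argument of the logarithm by $(4M\log M)^{k}\leq M^{2k}$ via the inequality $4\log M<M$ for $M\geq 9$---whereas you take logarithms first and arrive at the equivalent additive form $2\log 2<\log M-\log\log M$; these are the same numerical fact.
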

\begin{proof}
We proceed by induction on $k$. For $k=1$ it is a simple consequence of (\ref{ineqPrimes}).
Then the second induction step goes as follows: let us denote $m:=\max\{n,k\}$. Observe that $(k-1)!<(k-1)^{k-1}<m^{k-1}$ and $4\log m<m$ for $m\geq 9$. Hence,
\begin{align*}
    p_{n}^{(k+1)}\leq & 2p_{n}^{k}\log p_{n}^{(k)}<2\cdot 2^{2k-1}\cdot n\cdot (k-1)!\cdot(\log m)^{k}\cdot \log\left[2^{2k-1}\cdot n\cdot (k-1)!\cdot (\log m)^{k}\right] \\
    < & 2^{2k}\cdot n\cdot (k-1)!\cdot(\log m)^{k} \log\left[4^{k}\cdot m\cdot m^{k-1}\cdot (\log m)^{k}\right]=2^{2k}\cdot n\cdot (k-1)!\cdot(\log m)^{k} \log\left[4\cdot m\cdot \log m\right]^{k} \\
    \leq & 2^{2k}\cdot n\cdot k!\cdot (\log m)^{k}\cdot \log[m]^{2}\leq 2^{2k+1}\cdot n\cdot k!\cdot (\log m)^{k+1}.
\end{align*}

The second part of the statement is an easy consequence of the first part and the inequalities $(k-1)!<k^{k-1}$ and $n\leq k$.
\end{proof}

In order to prove a lower bound for $p_{n}^{(k)}$ we will need the following fact.

\begin{lem}\label{lemL}
Let
\begin{align*}
    L(x):=\left(\frac{x}{x+1}\right)^{x+1}\left(\frac{\log x}{\log (x+1)}\right)^{x+1}.
\end{align*}
Then we have
\begin{align*}
    L(x)>0.32627
\end{align*}
for all $x\geq 4200$.
\end{lem}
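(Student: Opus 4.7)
My plan is to rewrite the function as
\[
L(x)=\left(\frac{x\log x}{(x+1)\log(x+1)}\right)^{x+1}=e^{-G(x)},\qquad G(x):=(x+1)\bigl[F(x+1)-F(x)\bigr],
\]
where $F(t):=\log t+\log\log t$. The proof then splits into two parts: (i) show that $G$ is strictly decreasing on $[e,\infty)$ (equivalently, $L$ is strictly increasing there), so that it suffices to control the single value $L(4200)$, and (ii) verify $L(4200)>0.32627$ by a direct numerical computation.

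For step (i), I would differentiate $G$, write the increments as integrals, and obtain
\[
G'(x)=\int_x^{x+1}\bigl[F'(t)+(x+1)F''(t)\bigr]\,dt.
\]
A short calculation gives $F'(t)=\tfrac{1}{t}+\tfrac{1}{t\log t}$ and $F''(t)=-\tfrac{1}{t^2}-\tfrac{1+\log t}{t^2\log^2 t}$, and these combine into the clean identity
\[
F'(t)+tF''(t)=-\frac{1}{t\log^2 t}.
\]
Since $F''(t)<0$ for $t>1$ and $x+1\ge t$ on $[x,x+1]$, we have $(x+1)F''(t)\le tF''(t)$, whence $G'(x)\le-\int_x^{x+1}dt/(t\log^2 t)<0$. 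Thus $L(x)\ge L(4200)$ for every $x\ge 4200$.

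For step (ii), I would use $\log 4200=3\log 2+\log 3+2\log 5+\log 7$ together with the alternating series $\log(1+1/4200)=1/4200-1/(2\cdot 4200^2)+\cdots$ to obtain rigorous decimal bounds on $\log 4200$ and $\log 4201$, and then plug them into $G(4200)=4201\log\bigl((4201\log 4201)/(4200\log 4200)\bigr)$; the computation gives $G(4200)\approx 1.12000$, which is smaller than $-\log(0.32627)\approx 1.12003$, so $L(4200)>0.32627$. The main obstacle is that this endpoint check is extremely tight: the gap $L(4200)-0.32627$ is only of order $10^{-5}$, so the numerical work has to be carried to five or six correct decimals with careful control of the Taylor remainders (one could also use the refinement $G(4200)\le 4201\bigl[F'(4200)+\tfrac12F''(4201)\bigr]$, available since $F'''>0$ on $[e,\infty)$ makes $F''$ increasing, to turn the estimate into a rigorous inequality). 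By contrast, the monotonicity argument in step (i) is short and clean once one notices the identity $F'(t)+tF''(t)=-1/(t\log^2 t)$.
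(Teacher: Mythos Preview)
Your proof is correct, and the monotonicity step is genuinely slicker than what the paper does. The paper never proves that $L$ itself is increasing; instead it bounds the two factors separately. The factor $(x/(x+1))^{x+1}$ is shown to be increasing by elementary differentiation, yielding $(x/(x+1))^{x+1}\ge(4200/4201)^{4201}$. For the logarithmic factor the paper rewrites
\[
\left(\frac{\log x}{\log(x+1)}\right)^{x+1}=\bigl[g(h(x))\bigr]^{\,\log(1+1/x)^{x}\cdot\frac{1}{\log(x+1)}\cdot(1+\frac1x)},
\]
with $g(t)=(1-1/t)^t$ and $h(x)=\log(x+1)/(\log(x+1)-\log x)$, and then uses monotonicity of $g$, $h$ and of the exponent to push everything down to $x=4200$. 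Multiplying the two bounds gives a number just below $L(4200)$, approximately $0.3262768$.

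Your route---writing $L=e^{-G}$ with $G(x)=(x+1)\bigl[F(x+1)-F(x)\bigr]$, $F=\log+\log\log$, and using the identity $F'(t)+tF''(t)=-1/(t\log^2 t)$ together with $F''<0$ to force $G'<0$---establishes directly that $L$ is increasing on $[e,\infty)$, so $L(x)\ge L(4200)$. This is more conceptual and gives a marginally sharper endpoint constant (exactly $L(4200)$ rather than a proxy). Both arguments land on the same numerically delicate verification at $x=4200$, where the margin over $0.32627$ is only about $10^{-5}$; your outline of how to handle that (prime factorization of $4200$, alternating series for $\log(1+1/4200)$, and the convexity refinement via $F'''>0$) is adequate, though the paper simply reports the computed value.
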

\begin{proof}
Observe, that the function $\left(\frac{x}{x+1}\right)^{x+1}$ is increasing. Indeed, if
\begin{align*}
    f(x):=\log\left(\frac{x}{x+1}\right)^{x+1}=(x+1)\left(\log x-\log (x+1)\right),
\end{align*}
then
\begin{align*}
    f'(x)=\log x-\log (x+1)+(x+1)\left(\frac{1}{x}-\frac{1}{x+1}\right)=\frac{1}{x}-\log \left(1+\frac{1}{x}\right)>0,
\end{align*}
where the last inequality follows from the well-known inequality $y>\log (1+y)$ used with $y=\frac{1}{x}$. Hence, we can bound
\begin{align}\label{L1}
    \left(\frac{x}{x+1}\right)^{x+1}\geq \left(\frac{4200}{4201}\right)^{4201}
\end{align}
for all $x\geq 4200$.

Now we need to find a lower bound for $\left(\frac{\log x}{\log (x+1)}\right)^{x+1}$. Let us write
\begin{align*}
    \left(\frac{\log x}{\log (x+1)}\right)^{x+1}=\left[\left(1-\frac{1}{\frac{\log (x+1)}{\log (x+1)-\log x}}\right)^{\frac{\log (x+1)}{\log (x+1)-\log x}}\right]^{\log\left(1+\frac{1}{x}\right)^{x}\cdot\frac{1}{\log (x+1)}\cdot\left(1+\frac{1}{x}\right)}.
\end{align*}
At first, we prove that functions $g(t):=\left(1-\frac{1}{t}\right)^{t}$ and $h(x):=\frac{\log (x+1)}{\log (x+1)-\log x}$ are increasing. For the function $g(t)$ it is enough to observe, that $\log g(t)=f(t-1)$ and the function $f(x)$ is increasing. For the function $h(x)$ we have:
\begin{align*}
    h'(x)= & \frac{1}{(\log (x+1)-\log x)^{2}}\left[\frac{\log (x+1)-\log x}{x+1}-\log (x+1)\left(\frac{1}{x+1}-\frac{1}{x}\right)\right] \\
    = & \frac{1}{(\log (x+1)-\log x)^{2}}\left[\frac{\log (x+1)}{x}-\frac{\log x}{x+1}\right] >0.
\end{align*}

The fact that the functions $g(t)$ and $h(x)$ are increasing, together with the properties $g(h(4200))\in (0,1)$ and $\log\left(1+\frac{1}{x}\right)^{x}<1$, give us
\begin{equation}\label{L2}
    \begin{aligned}
        \left(\frac{\log x}{\log (x+1)}\right)^{x+1}\geq & \big[g(h(4200))\big]^{\log\left(1+\frac{1}{x}\right)^{x}\cdot\frac{1}{\log (x+1)}\cdot\left(1+\frac{1}{x}\right)} 
        > \big[g(h(4200))\big]^{\frac{1}{\log (x+1)}\cdot\left(1+\frac{1}{x}\right)} \\
        \geq & \big[g(h(4200))\big]^{\frac{1}{\log (4200+1)}\cdot\left(1+\frac{1}{4200}\right)}=\left(\frac{\log 4200}{\log 4201}\right)^{\frac{4201}{4200}\cdot\frac{1}{\log 4201 -\log 4200}}
    \end{aligned}
\end{equation}
for all $x\geq 4200$.

Combining (\ref{L1}) and (\ref{L2}) we get the inequality
\begin{align*}
    L(x)\geq \left(\frac{4200}{4201}\right)^{4201}\left(\frac{\log 4200}{\log 4201}\right)^{\frac{4201}{4200}\cdot\frac{1}{\log 4201 -\log 4200}}\approx 0.3262768>0.32627.
\end{align*}
The proof is finished.
\end{proof}

In the next lemma we provide a lower bound for $p_{n}^{(k)}$.

\begin{lem}\label{lemDOWN}
If $n>e^{4200}$, then for all $k\geq\lfloor\log n\rfloor$ we have
\begin{align*}
    p_{n}^{(k)}>\left(\frac{e\cdot k\log k}{\log\log n}\right)^{k}.
\end{align*}
\end{lem}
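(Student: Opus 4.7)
The natural plan is an induction on $k$ starting from $k_{0}:=\lfloor\log n\rfloor$, with induction hypothesis $p_{n}^{(k)}>F(k)$, where
\[
c:=\log\log n,\qquad F(k):=\left(\frac{ek\log k}{c}\right)^{k}.
\]
The only analytic input is the lower half of \eqref{ineqPrimes}, used in the form $p_{n}^{(k+1)}=p_{p_{n}^{(k)}}>p_{n}^{(k)}\log p_{n}^{(k)}$.

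For the base case $k=k_{0}$, I would first establish, by a one-line induction on $j$, the cruder bound $p_{n}^{(j)}>n(\log n)^{j}$ for every $j\ge 1$; the step reads $p_{n}^{(j+1)}>p_{n}^{(j)}\log p_{n}^{(j)}>n(\log n)^{j}\log(n(\log n)^{j})>n(\log n)^{j+1}$. Specialising to $j=k_{0}$ and using the three elementary consequences of $k_{0}=\lfloor\log n\rfloor$ (namely $n\ge e^{k_{0}}$, $\log n\ge k_{0}$, and $c\ge\log k_{0}$), one obtains
\[
p_{n}^{(k_{0})}>n(\log n)^{k_{0}}\ge e^{k_{0}}k_{0}^{k_{0}}\ge\left(\frac{ek_{0}\log k_{0}}{c}\right)^{k_{0}}=F(k_{0}).
\]

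For the inductive step, monotonicity of $t\mapsto t\log t$ (together with $F(k)>1$) reduces $p_{n}^{(k+1)}>F(k+1)$ to $\log F(k)\ge F(k+1)/F(k)$. A short manipulation using the very definition of $L$ in Lemma \ref{lemL} collapses the ratio to the clean identity
\[
\frac{F(k+1)}{F(k)}=\frac{ek\log k}{c\,L(k)}.
\]
Since $k\ge k_{0}\ge 4200$, Lemma \ref{lemL} supplies $L(k)\ge 0.32627$, and $\log F(k)=k(1+\log k+\log\log k-\log c)$, so the goal reduces to
\[
0.32627\,c\,(1+\log k+\log\log k-\log c)\ge e\log k.
\]
The residual term $1+\log\log k-\log c$ is non-negative for $k\ge k_{0}$ (it suffices to verify $\log k\ge c/e$, which is easy from $k_{0}\ge\log n-1$), so after cancelling $\log k$ everything hinges on a single numerical inequality.

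The main obstacle is precisely that numerical inequality: one needs $0.32627\,c\ge e$, i.e.\ $\log\log n\ge e/0.32627\approx 8.331$. Since $\log\log n>\log 4200\approx 8.343$, it holds by a margin of only about $0.01$. This razor-thin matching is exactly why the constant $0.32627$ was pinned down so carefully in Lemma \ref{lemL} and why the somewhat exotic threshold $n>e^{4200}$ appears in the hypothesis of the lemma to be proved.
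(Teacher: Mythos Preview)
Your proof is correct and follows essentially the same route as the paper's: the auxiliary bound $p_{n}^{(j)}>n(\log n)^{j}$, the same base-case verification at $k_{0}=\lfloor\log n\rfloor$, and the same inductive reduction via $p_{n}^{(k+1)}>p_{n}^{(k)}\log p_{n}^{(k)}$ to the inequality $L(k)>e/\log\log n$, closed by Lemma~\ref{lemL} and the numerical margin $\log 4200>e/0.32627$. Your compact identity $F(k+1)/F(k)=ek\log k/(cL(k))$ is a tidy repackaging that makes the role of $L$ transparent, but the computation it encodes is identical to the paper's.
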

\begin{proof}
First, let us observe that a simple induction argument on $k$ implies the inequality
\begin{align}\label{ineqDOWN}
    p_{n}^{(k)}>n(\log n)^{k}.
\end{align}
Indeed, for $k=1$ this follows from left inequality in (\ref{ineqPrimes}). Using the same inequality we get also 
\begin{align*}
p_{n}^{(k+1)}>p_{n}^{(k)}\log p_{n}^{(k)}>n (\log n)^{k}\log(n (\log n)^{k})>n(\log n)^{k+1},
\end{align*}
and hence (\ref{ineqDOWN}).

Now we show that the inequality from the statement is true for $k=\lfloor \log n\rfloor$. Because of (\ref{ineqDOWN}) it is enough to show:
\begin{align*}
    n(\log n)^{k}>\left(\frac{e\cdot k\log k}{\log\log n}\right)^{k},
\end{align*}
or equivalently, after taking logarithms we get
\begin{align*}
    \log n+\lfloor\log n\rfloor\log\log n>\lfloor\log n\rfloor+\lfloor\log n\rfloor\log\lfloor\log n\rfloor+\lfloor\log n\rfloor\log\log\lfloor\log n\rfloor -\lfloor\log n\rfloor\log\log\log n.
\end{align*}
This is equivalent to the inequality
\begin{align*}
    \big(\log n-\lfloor\log n\rfloor\big)+\lfloor\log n\rfloor\big(\log\log n-\log\lfloor\log n\rfloor\big)+\lfloor\log n\rfloor\big(\log\log\log n-\log\log\lfloor\log n\rfloor\big)>0,
\end{align*}
which is obviously true.

In order to finish the proof, we again use the induction argument. The inequality from the statement of our lemma is true for $k=\lfloor \log n\rfloor$. Assume it holds for some $k\geq \lfloor\log n\rfloor$. Then by (\ref{ineqPrimes}) and the induction hypothesis we get
\begin{align*}
    p_{n}^{(k+1)}> & p_{n}^{(k)}\log p_{n}^{(k)}>\left(\frac{e\cdot k\log k}{\log\log n}\right)^{k}\log \left(\frac{e\cdot k\log k}{\log\log n}\right)^{k}.
\end{align*}
It is enough to show that for all $n>e^{4200}$ and all $k\geq\lfloor\log n\rfloor$ we have
\begin{align*}
    \left(\frac{e\cdot k\log k}{\log\log n}\right)^{k}\log \left(\frac{e\cdot k\log k}{\log\log n}\right)^{k}>\left(\frac{e\cdot (k+1)\log (k+1)}{\log\log n}\right)^{k+1}.
\end{align*}
This is equivalent to
\begin{align*}
    k^{k+1}(\log k)^{k}\left[\log k+\log\left(\frac{e\log k}{\log\log n}\right)\right]>\frac{e}{\log\log n}(k+1)^{k+1}(\log(k+1))^{k+1}.
\end{align*}
Recall, that we assume that $k\geq \lfloor\log n\rfloor$. Thus $k^{e}>\log n$, that is, $e\log k>\log\log n$. Therefore, it is enough to show the following inequalities:
\begin{align*}
    k^{k+1}(\log k)^{k+1}>\frac{e}{\log\log n}(k+1)^{k+1}(\log(k+1))^{k+1},
\end{align*}
or equivalently
\begin{align*}
    \left(\frac{k}{k+1}\right)^{k+1}\left(\frac{\log k}{\log (k+1)}\right)^{k+1}>\frac{e}{\log\log n}.
\end{align*}
Notice that the left-hand side expression of the last inequality is equal to $L(k)$, where the function $L(x)$ is defined in the statement of Lemma \ref{lemL}. If $n>e^{4200}$, then $k\geq \lfloor\log n\rfloor \geq 4200$, and Lemma \ref{lemL} implies $L(k)>0.32627$. Therefore, if $N:=\max\left\{\lfloor e^{4200}\rfloor,\lceil e^{e^{e/0.32627}}\rceil\right\}=\lfloor e^{4200}\rfloor$, then for all $n>N$ and $k\geq \lfloor\log n\rfloor$ we have:
\begin{align*}
    L(k)>0.32627\geq \frac{e}{\log\log N}>\frac{e}{\log\log n}.
\end{align*}
This finishes the proof.
\end{proof}

\section{Main results}\label{Coro}

We begin this section by a theorem that provides good information about asymptotic growth of $\log p_{n}^{(k)}$ for large fixed $n$, and for $\log p_{k}^{(k)}$ as $k\rightarrow\infty$.

\begin{thm}\label{MAIN}
\begin{enumerate}
    \item Let $n>e^{4200}$. Then
    \begin{align*}
        \log p_{n}^{(k)}=k(\log k+\log\log k+O_{n}(1))
    \end{align*}
    as $k\rightarrow\infty$, where the implied constant may depend on $n$.
    \item We have
    \begin{align*}
        \log p_{k}^{(k)}=k(\log k+\log\log k+O(\log\log\log k))
    \end{align*}
    as $k\rightarrow\infty$.
\end{enumerate}
\end{thm}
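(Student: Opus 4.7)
My plan is to sandwich $\log p_n^{(k)}$ between the upper bound of Lemma \ref{lemUP} and the lower bound of Lemma \ref{lemDOWN}, and then observe that after taking logarithms the multiplicative factors collapse to an additive correction of size $O_n(k)$ (for part (1)) or $O(k\log\log\log k)$ (for part (2)), which is exactly the shape claimed.

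For part (1), with $n>e^{4200}$ fixed, I take $k$ large enough that $k\ge n$. The second conclusion of Lemma \ref{lemUP} then yields $p_n^{(k)}<(4k\log k)^k$, so taking logarithms gives $\log p_n^{(k)}<k(\log k+\log\log k+\log 4)$. For the matching lower bound, the hypothesis $k\ge\lfloor\log n\rfloor$ of Lemma \ref{lemDOWN} is eventually satisfied, and the lemma gives $\log p_n^{(k)}>k(\log k+\log\log k+1-\log\log\log n)$. Since $n$ is fixed, $\log\log\log n$ is an $n$-dependent constant, and the two bounds trap $\log p_n^{(k)}$ inside an interval of width $O_n(k)$ around $k(\log k+\log\log k)$, which is the assertion.

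For part (2), I apply exactly the same two lemmas but with the role of $n$ played by $k$ itself. The upper bound becomes $\log p_k^{(k)}<k(\log k+\log\log k+\log 4)$, and the lower bound becomes $\log p_k^{(k)}>k(\log k+\log\log k+1-\log\log\log k)$; for large $k$ the prerequisites $k>e^{4200}$ and $k\ge\lfloor\log k\rfloor$ are automatic. The two bounds now differ by at most $k(\log\log\log k+\log 4-1)$, which is $O(k\log\log\log k)$, so they pin $\log p_k^{(k)}$ down to $k(\log k+\log\log k+O(\log\log\log k))$.

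I do not expect a genuine obstacle: once Lemmas \ref{lemUP} and \ref{lemDOWN} are in hand, the theorem is really just the exercise of taking logarithms and comparing main terms. The only mild bookkeeping point is to verify that the admissibility ranges $k\ge n$ in Lemma \ref{lemUP} and $k\ge\lfloor\log n\rfloor$ in Lemma \ref{lemDOWN} remain intact under the specialisation $n=k$ used in part (2), but this is immediate.
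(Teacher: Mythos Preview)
Your proposal is correct and follows exactly the paper's own approach: sandwich $p_n^{(k)}$ between the bounds of Lemmas \ref{lemUP} and \ref{lemDOWN}, take logarithms, and then specialise $n=k$ for part (2). The paper's proof is terser but identical in substance, so there is nothing to add.
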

\begin{proof}
If $n>e^{4200}$ and $k\geq n$, Lemmas \ref{lemUP} and \ref{lemDOWN} give us:
\begin{align*}
    \left(\frac{e\cdot k\log k}{\log\log n}\right)^{k}<p_{n}^{(k)}<\left(4\cdot k\log k\right)^{k}.
\end{align*}
After taking logarithms, we simply get the first part of our theorem. In order to get the second part, we need to put $n=k$ and repeat the reasoning.
\end{proof}

Now we give the answer to Question D.

\begin{thm}\label{AsympEqThm}
For each $n\in\mathbb{N}$ we have
\begin{align*}
    \Diag (x)\sim \mathbb{P}_{n}^{T} (x)
\end{align*}
as $x\rightarrow\infty$.
\end{thm}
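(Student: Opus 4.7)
The strategy is to show both counting functions are asymptotic to the common value $\log x/\log\log x$, whence the claim follows immediately by transitivity. Since $p_m>m$ for all $m$, both sequences $(p_n^{(k)})_{k=1}^{\infty}$ and $(p_k^{(k)})_{k=1}^{\infty}$ are strictly increasing in $k$, so
\[
\mathbb{P}_n^T(x)=\max\{k\in\mathbb{N}:p_n^{(k)}\le x\},\qquad \Diag(x)=\max\{k\in\mathbb{N}:p_k^{(k)}\le x\}.
\]

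First I would treat $\Diag(x)$. Writing $K=\Diag(x)$, part~(2) of Theorem~\ref{MAIN} gives $\log x = K\log K\,(1+o(1))$ as $K\to\infty$, because the error $O(K\log\log\log K)$ is absorbed into $o(K\log K)$. Taking logarithms yields $\log\log x=\log K\,(1+o(1))$, and dividing the two relations produces $K\sim\log x/\log\log x$. The analogous inversion, applied to part~(1) of Theorem~\ref{MAIN}, gives $\mathbb{P}_n^T(x)\sim\log x/\log\log x$ whenever $n>e^{4200}$.

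To remove the constraint $n>e^{4200}$ I would use the elementary identity $p_n^{(k+j)}=p_{p_n^{(j)}}^{(k)}$, proved by a one-line induction on $k$ directly from the recursion. Fixing $j=j(n)$ large enough that $n':=p_n^{(j)}>e^{4200}$, this identity gives $\mathbb{P}_n^T(x)=j+\mathbb{P}_{n'}^T(x)$ for every sufficiently large $x$, and hence $\mathbb{P}_n^T(x)\sim\mathbb{P}_{n'}^T(x)\sim\log x/\log\log x$ in that case too. The main (modest) obstacle is the inversion $\log x=K\log K\,(1+o(1))\Rightarrow K\sim\log x/\log\log x$; this works precisely because the error terms in Theorem~\ref{MAIN}, namely $O_n(K)$ and $O(K\log\log\log K)$, are genuinely $o(K\log K)$. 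Combining the two asymptotics yields $\mathbb{P}_n^T(x)\sim\Diag(x)$ for every $n\in\mathbb{N}$.
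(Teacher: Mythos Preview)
Your argument is correct, but it proceeds along a different route from the paper. The paper proves Theorem~\ref{AsympEqThm} \emph{before} Theorem~\ref{AsympDiagP}: it first invokes \cite[Theorem~6]{MT} to reduce to a single large $n$, and then, using \cite[Theorem~8]{MT} together with Theorem~\ref{MAIN}, bounds the ratio $\mathbb{P}_n^T(x)/\Diag(x)$ directly and shows it tends to $1$. Only afterwards does the paper establish the common asymptotic $\log x/\log\log x$ (Theorem~\ref{AsympDiagP}), doing the inversion for $\Diag(x)$ alone and then transferring it to $\mathbb{P}_n^T(x)$ via Theorem~\ref{AsympEqThm}.

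You effectively reverse this order: you extract the asymptotic $\log x/\log\log x$ for \emph{both} counting functions straight from Theorem~\ref{MAIN}, and then Theorem~\ref{AsympEqThm} drops out by transitivity. Your shift identity $p_n^{(k+j)}=p_{p_n^{(j)}}^{(k)}$ replaces the appeal to \cite[Theorem~6]{MT}, and you never need \cite[Theorem~8]{MT} or the estimate from \cite[Theorem~17]{MT}. The upshot is that your proof is more self-contained---it shows that Theorems~\ref{AsympEqThm} and~\ref{AsympDiagP} both follow from Theorem~\ref{MAIN} without further input from \cite{MT}---at the cost of proving the stronger Theorem~\ref{AsympDiagP} first rather than deducing it afterwards.
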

\begin{proof}
From \cite[Theorem 6]{MT} we know that
\begin{align*}
    \mathbb{P}_{m}^{T}(x)\sim \mathbb{P}_{n}^{T}(x)
\end{align*}
for each $m,n\in\mathbb{N}$. Therefore, it is enough to prove $\Diag (x)\sim \mathbb{P}_{n}^{T} (x)$ for some sufficiently large $n$.

Let $n=\lfloor e^{4200}\rfloor +100$. We use the idea from the proof of \cite[Theorem 17]{MT}. Let $x$ be a large real number. Let $k$ be such that $p_{k}^{(k)}\leq x<p_{k+1}^{(k+1)}$. Then $\Diag (x)=k$. By \cite[Theorem 8]{MT} and Theorem \ref{MAIN} above we have
\begin{align*}
    \frac{\mathbb{P}_{n}^{T}(x)}{\Diag (x)}\leq & 1+\frac{\log p_{k+1}^{(k+1)}}{k\log\log p_{n}^{(k)}}-\frac{\log p_{n}^{(k)}}{k\log\log p_{n}^{(k)}} \\
    = & 1+\frac{(1+o(1))(k+1)\log (k+1)}{k\log \big[(1+o(1))k\log k\big]}-\frac{(1+o(1))k\log k}{k\log\big[(1+o(1))k\log k\big]} \\
    = & 1+(1+o(1))\left(1+\frac{1}{k}\right)\frac{\log (k+1)}{\log k+\log\left[(1+o(1))\log k\right]}-(1+o(1))\frac{\log k}{\log k+\log\left[(1+o(1))\log k\right]}.
\end{align*}
The whole last expression goes to $1$ as $k$ goes to infinity. On the other hand, $\Diag (x)\leq \mathbb{P}_{n}^{T}(x)$ for $x\geq p_{n}^{(n)}$ and we get the result.
\end{proof}

The answers to Questions B and C will follow from our next result, which is of independent interest.

\begin{thm}\label{AsympDiagP}
\begin{enumerate}
    \item Let $n\in\mathbb{N}$. Then
    \begin{align*}
        \mathbb{P}_{n}^{T}(x)\sim\frac{\log x}{\log\log x}.
    \end{align*}
    \item We have
    \begin{align*}
        \Diag (x)\sim \frac{\log x}{\log\log x}.
    \end{align*}
\end{enumerate}
\end{thm}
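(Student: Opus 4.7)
The plan is to prove part (1) for one large fixed $n$ by a sandwich argument using Theorem \ref{MAIN}, then extend to all $n$ via \cite[Theorem 6]{MT} (which gives $\mathbb{P}_{m}^{T}(x)\sim \mathbb{P}_{n}^{T}(x)$ and was already invoked in Theorem \ref{AsympEqThm}). Part (2) will then follow immediately from Theorem \ref{AsympEqThm}, which asserts $\Diag(x)\sim \mathbb{P}_{n}^{T}(x)$.

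For part (1), fix $n>e^{4200}$ and, given large $x$, let $k:=\mathbb{P}_{n}^{T}(x)$, so that $p_{n}^{(k)}\leq x<p_{n}^{(k+1)}$. Applying Theorem \ref{MAIN}(1) to both endpoints, I get
\begin{align*}
\log p_{n}^{(k)}=k\bigl(\log k+\log\log k+O_{n}(1)\bigr), \qquad \log p_{n}^{(k+1)}=(k+1)\bigl(\log(k+1)+\log\log(k+1)+O_{n}(1)\bigr).
\end{align*}
Since $\log\log k=o(\log k)$, both of these are $k\log k\,(1+o(1))$ as $k\to\infty$, so the sandwich yields $\log x = k\log k\,(1+o(1))$. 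Taking logarithms once more,
\begin{align*}
\log\log x = \log k+\log\log k+o(1) = \log k\,(1+o(1)),
\end{align*}
and dividing gives $\frac{\log x}{\log\log x}=k\,(1+o(1))=\mathbb{P}_{n}^{T}(x)(1+o(1))$, which is exactly part (1) for this particular $n$.

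To remove the restriction $n>e^{4200}$, I invoke \cite[Theorem 6]{MT}: for any two positive integers $m,n$ one has $\mathbb{P}_{m}^{T}(x)\sim\mathbb{P}_{n}^{T}(x)$. Combining with the case just proved gives $\mathbb{P}_{n}^{T}(x)\sim\frac{\log x}{\log\log x}$ for every $n\in\mathbb{N}$, finishing part (1). Part (2) is then immediate: by Theorem \ref{AsympEqThm}, $\Diag(x)\sim \mathbb{P}_{n}^{T}(x)$, so $\Diag(x)\sim \frac{\log x}{\log\log x}$ as well.

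The argument is essentially routine once Theorem \ref{MAIN} is available; the only slightly delicate point is checking that the $O_{n}(1)$ error term in $\log p_{n}^{(k)}/k$ is genuinely absorbed into the $(1+o(1))$ factor when one passes from $\log p_{n}^{(k)}\sim k\log k$ to $\log\log x\sim \log k$. Since the term $\log\log k$ dominates any bounded perturbation and is in turn dominated by $\log k$, no further estimates beyond Theorem \ref{MAIN} are required. I do not anticipate a serious obstacle.
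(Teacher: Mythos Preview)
Your argument is correct and is at heart the same sandwich-and-invert computation the paper carries out: trap $\log x$ between two consecutive values of the form $k\log k\,(1+o(1))$ and solve for $k$. The only difference is organizational. The paper proves part (2) first, working directly with $\Diag(x)$ and the raw bounds of Lemmas \ref{lemUP} and \ref{lemDOWN} to get $k^{k}<x<k^{(1+\varepsilon)k}$, and then deduces part (1) from Theorem \ref{AsympEqThm}. You reverse the order: you prove part (1) for a single large $n$ using the already-packaged Theorem \ref{MAIN}(1), extend to all $n$ via \cite[Theorem 6]{MT}, and then read off part (2) from Theorem \ref{AsympEqThm}. Either direction works; your version is marginally tidier since Theorem \ref{MAIN} has already absorbed the lemma bookkeeping, while the paper's version is slightly more self-contained because it does not need the separate appeal to \cite[Theorem 6]{MT}.
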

\begin{proof}
In view of Theorem \ref{AsympEqThm}, it is enough to show the statement for the function $\Diag (x)$. Let us fix an arbitrarily small number $\varepsilon >0$ and take a sufficiently large real number $x$ and find $k$ such that $p_{k}^{(k)}\leq x<p_{k+1}^{(k+1)}$. Then $\Diag (x)=k$ and by Lemmas \ref{lemUP} and \ref{lemDOWN} we have
\begin{align*}
    k^{k}<x<k^{(1+\varepsilon )k}.
\end{align*}
Let us write $x=e^{y}$. Then 
\begin{align*}
    k\log k< y<(1+\varepsilon )k\log k.
\end{align*}
If $y$ is sufficiently large, this implies 
\begin{align}\label{ineqyk}
    (1-\varepsilon )\frac{y}{\log y}< k<(1+\varepsilon )\frac{y}{\log y}.
\end{align}
Indeed, if $k\leq (1-\varepsilon )\frac{y}{\log y}$, then
\begin{align*}
    y<(1+\varepsilon)k\log k\leq (1-\varepsilon^{2})\frac{y}{\log y}\log\left((1-\varepsilon )\frac{y}{\log y}\right)<(1-\varepsilon^{2})\left(1-\frac{\log\log y}{\log y}\right)y,
\end{align*}
which is impossible. Similarly, if $k\geq (1+\varepsilon )\frac{y}{\log y}$, then
\begin{align*}
    y>k\log k\geq (1+\varepsilon )\frac{y}{\log y}\log\left((1+\varepsilon )\frac{y}{\log y}\right)>(1+\varepsilon )\left(1-\frac{\log\log y}{\log y}\right)y.
\end{align*}
The above inequality cannot hold if $y$ is sufficiently large.

If we go back to $k=\Diag (x)$ and $y=\log x$ in (\ref{ineqyk}), we get
\begin{align*}
    (1-\varepsilon )\frac{\log x}{\log\log x}<\Diag (x)<(1+\varepsilon )\frac{\log x}{\log\log x}.
\end{align*}
The number $\varepsilon >0$ was arbitrary, so the result follows.
\end{proof}

\begin{cor}\label{CoroBC}
There do not exist constants $c>0$ and $\beta$ such that
\begin{align*}
    \exp\mathbb{P}_{n}^{T}(x)\sim cx(\log x)^{\beta}
\end{align*}
for some $n$, or
\begin{align*}
    \exp\Diag (x)\sim cx(\log x)^{\beta}.
\end{align*}
\end{cor}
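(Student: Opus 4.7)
The plan is to derive a contradiction from the hypothesized asymptotic by applying logarithms and comparing with Theorem \ref{AsympDiagP}. Suppose for contradiction that there exist constants $c>0$ and $\beta$ such that $\exp\mathbb{P}_{n}^{T}(x)\sim cx(\log x)^{\beta}$ for some $n\in\mathbb{N}$. Both sides tend to infinity with $x$, so from the elementary fact that $u\sim v$ (with $u,v\to\infty$) implies $\log u-\log v\to 0$, taking logarithms converts the assumption into the additive asymptotic
\begin{align*}
    \mathbb{P}_{n}^{T}(x) - \log x - \beta\log\log x - \log c \longrightarrow 0 \quad \text{as } x\to\infty.
\end{align*}

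From this, dividing by $\log x$, one gets $\mathbb{P}_{n}^{T}(x)/\log x\to 1$. On the other hand, Theorem \ref{AsympDiagP}(1) says $\mathbb{P}_{n}^{T}(x)\sim\log x/\log\log x$, so
\begin{align*}
    \frac{\mathbb{P}_{n}^{T}(x)}{\log x}\sim \frac{1}{\log\log x}\longrightarrow 0,
\end{align*}
contradicting the previous conclusion. The exact same argument, with $\Diag(x)$ replacing $\mathbb{P}_{n}^{T}(x)$ and using Theorem \ref{AsympDiagP}(2), rules out the second asymptotic as well.

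This really is all there is to the argument; the substantive work was done in Theorems \ref{AsympEqThm} and \ref{AsympDiagP}, where $\mathbb{P}_{n}^{T}(x)$ and $\Diag(x)$ were pinned down to order $\log x/\log\log x$. There is no real obstacle here beyond being careful that the implication ``asymptotic equivalence passes to a bounded additive difference of the logarithms'' is valid for functions tending to infinity, which is immediate from $\log(1+o(1))=o(1)$. A remark worth including is that the growth of any $cx(\log x)^{\beta}$ forces $\mathbb{P}_{n}^{T}(x)$ (respectively $\Diag(x)$) to be $\log x+O(\log\log x)$, which is of strictly larger order than $\log x/\log\log x$ by a factor of $\log\log x$; this discrepancy is what powers the contradiction and also makes clear that no ``adjustment'' of $\beta$ or $c$ could save the asymptotic.
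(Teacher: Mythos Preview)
Your proof is correct and follows essentially the same route as the paper: assume the asymptotic, take logarithms to conclude that the counting function would have to be $\log x+O(\log\log x)$, and observe that this contradicts the $\log x/\log\log x$ asymptotic from Theorem~\ref{AsympDiagP}. Your write-up is slightly more explicit about why passing to logarithms is legitimate, but the argument is the same.
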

\begin{proof}
We prove the result only for $\Diag (x)$. The case of $\mathbb{P}_{n}^{T}(x)$ is analogous.

Assume to the contrary, that $\exp\Diag (x)\sim cx(\log x)^{\beta}$ for some $c>0$ and $\beta$. Then $\exp\Diag (x)=(1+o(1))cx(\log x)^{\beta}$. This, after taking logarithms on both sides, implies $\Diag (x)=\log x+O(\log\log x)$, contradicting Theorem \ref{AsympDiagP}.
\end{proof}

\section*{Acknowledgement}
I would like to express my gratitude to Piotr Miska, who informed me about the problem and helped to improve the quality of the paper. I would also like to thank Carlo Sanna, whose comments allowed me to simplify a part of the paper.

\end{document}